\newtheorem{theorem}{Theorem}[section]
\newtheorem{lem}[theorem]{Lemma}
\newtheorem{cor}[theorem]{Corollary}
\theoremstyle{definition}
\newtheorem{definition}[theorem]{Definition}
\theoremstyle{remark}
\newtheorem{remark}[theorem]{Remark}
\numberwithin{equation}{section}
\begin{document}

\newcommand{\spacing}[1]{\renewcommand{\baselinestretch}{#1}\large\normalsize}
\spacing{1.14}

\title{Geodesic Vector fields of invariant $(\alpha,\beta)$-metrics on Homogeneous spaces}

\author{M. Parhizkar}

\address{Department of Mathematics\\ Shahrood University of Technology\\ Shahrood\\ Iran.} \email{m\_parhizkar66@yahoo.com}

\author {H. R. Salimi Moghaddam}

\address{Department of Mathematics\\ Faculty of  Sciences\\ University of Isfahan\\ Isfahan\\ 81746-73441-Iran.} \email{hr.salimi@sci.ui.ac.ir and salimi.moghaddam@gmail.com}

\keywords{Homogeneous space, invariant Riemannian metric, invariant $(\alpha,\beta)$-metric \\
AMS 2010 Mathematics Subject Classification: 53C60, 53C30.}


\begin{abstract}
In this paper we show that for an invariant $(\alpha,\beta)-$metric $F$ on a homogeneous Finsler manifold $\frac{G}{H}$, induced by an invariant
Riemannian metric $\tilde{a}$ and an invariant vector field $\tilde{X}$, the vector $X=\tilde{X}(H)$ is a geodesic vector of $F$ if and only if it is
a geodesic vector of $\tilde{a}$. Then we give some conditions such that under them, an arbitrary vector is a geodesic vector of $F$ if and only if it is a geodesic vector of $\tilde{a}$. Finally we give an explicit formula for the flag curvature of bi-invariant $(\alpha,\beta)-$metrics on connected Lie groups.
\end{abstract}

\maketitle


\section{\textbf{Introduction}}
The geometry of invariant Finsler structures on homogeneous manifolds is one of the
interesting subjects in Finsler geometry which has been  studied by some Finsler geometers, during recent years(for example see \cite{AnDeng}, \cite{DeHo}, \cite{Latifi1}, \cite{Latifi2}, \cite{Latifi3}, \cite{Sa}.).
An important family of Finsler metrics is the family of
$(\alpha,\beta)-$metrics. These metrics are introduced by M.
Matsumoto (see \cite{Ma}). On the other hand, physicists are also interested in these metrics. They seek
for some non-Riemannian models for spacetime. For example, by using $(\alpha,\beta)-$metrics,
G. S. Asanov introduced Finsleroid–Finsler spaces and formulated pseudo-Finsleroid gravitational field equations (see \cite{Asan1} and \cite{Asan2}.).\\
In the present paper we study geodesic vectors of invariant $(\alpha,\beta)-$metrics on a homogeneous Finsler manifold $\frac{G}{H}$, induced by an invariant Riemannian metric $\tilde{a}$ and invariant vector field $\tilde{X}$. We show that the vector $X=\tilde{X}(H)$ is a geodesic vector of $(\alpha,\beta)-$metric if and only if it is a geodesic vector of $\tilde{a}$. Then we give some conditions such that under them, an arbitrary vector is a geodesic vector of $F$ if and only if it is a geodesic vector of $\tilde{a}$. Finally we give an explicit formula for the flag curvature of bi-invariant $(\alpha,\beta)-$metrics on connected Lie groups.\\
Now we give some preliminaries of Finsler geometry.\\
Let $M$ be a smooth $n-$dimensional manifold and $TM$ be its
tangent bundle. A Finsler metric on $M$ is a non-negative function
$F:TM\longrightarrow \Bbb{R}$ which has the following properties:
\begin{enumerate}
    \item $F$ is smooth on the slit tangent bundle
    $TM^0:=TM\setminus\{0\}$,
    \item $F(x,\lambda y)=\lambda F(x,y)$ for any $x\in M$, $y\in T_xM$ and $\lambda
    >0$,
    \item the $n\times n$ Hessian matrix $[g_{ij}(x,y)]=[\frac{1}{2}\frac{\partial^2 F^2}{\partial y^i\partial
    y^j}]$ is positive definite at every point $(x,y)\in TM^0$.
\end{enumerate}
One of the important quantities which associates with a Finsler
manifold is flag curvature which is a generalization of sectional
curvature to Finsler manifolds. Flag curvature is defined as
follows.

\begin{eqnarray}\label{flag}
  K(P,y)=\frac{g_y(R(u,y)y,u)}{g_y(y,y).g_y(u,u)-g_y^2(y,u)},
\end{eqnarray}
where $g_y(u,v)=\frac{1}{2}\frac{\partial^2}{\partial s\partial
t}(F^2(y+su+tv))|_{s=t=0}$, $P=span\{u,y\}$,
$R(u,y)y=\nabla_u\nabla_yy-\nabla_y\nabla_uy-\nabla_{[u,y]}y$ and
$\nabla$ is the Chern connection induced by $F$ (see \cite{BaChSh}
and \cite{Sh1}.).\\

\begin{definition}
Let $\alpha=\sqrt{\tilde{a}_{ij}(x)y^iy^j}$ be a Riemannian metric and $\beta(x,y)=b_i(x)y^i$ be a $1-$form on an $n-$dimensional manifold $M$. Let
\begin{equation}\label{alpha-norm}
    \|\beta(x)\|_\alpha:=\sqrt{\tilde{a}^{ij}(x)b_i(x)b_j(x)}.
\end{equation}
Now, let the function $F$ be defined as follows
\begin{equation}\label{alpha-beta metric}
    F:=\alpha\phi(s) \ \ \ , \ \ \ s=\frac{\beta}{\alpha},
\end{equation}
where $\phi=\phi(s)$ is a positive $C^\infty$ function on $(-b_0,b_0)$ satisfying
\begin{equation}\label{alpha-beta condition}
    \phi(s)-s\phi'(s)+(b^2-s^2)\phi''(s)>0 \ \ \ , \ \ \ |s|\leq b <b_0.
\end{equation}
Then by Lemma 1.1.2 of \cite{ChSh}, $F$ is a Finsler metric if $\|\beta(x)\|_\alpha<b_0$ for any $x\in M$.
A Finsler metric in the form (\ref{alpha-beta metric}) is called an $(\alpha,\beta)-$metric.
\end{definition}
The Riemannian metric $\tilde{a}$ induces an inner
product on any cotangent space $T^\ast_xM$ such that
$<dx^i(x),dx^j(x)>=\tilde{a}^{ij}(x)$. The induced inner product on
$T^\ast_xM$ induce a linear isomorphism between $T^\ast_xM$ and
$T_xM$. Then the 1-form $b$ corresponds to a vector field
$\tilde{X}$ on $M$ such that
\begin{eqnarray}
  \tilde{a}(y,\tilde{X}(x))=\beta(x,y).
\end{eqnarray}
Also we have $\|\beta(x)\|_{\alpha}=\|\tilde{X}(x)\|_{\alpha}$
(for more details see \cite{DeHo} and \cite{Sa}.).
Therefore we can write $(\alpha,\beta)-$metrics
as follows:
\begin{eqnarray}\label{invariant alpha-beta metric}
  F(x,y)=\alpha(x,y)\phi(\frac{\tilde{a}(\tilde{X}(x),y)}{\alpha(x,y)}),
\end{eqnarray}
where for any $x\in M$,
$\sqrt{\tilde{a}(\tilde{X}(x),\tilde{X}(x))}=\|\tilde{X}(x)\|_{\alpha}<b_0$.\\

\section{\textbf{Geodesic Vector fields of invariant $(\alpha,\beta)$-metrics on Homogeneous spaces}}

\begin{definition}
A Finsler space $(M,F)$ is called a homogeneous Finsler space if the group of isometries of $(M, F)$,
$I(M, F)$, acts transitively on $M$.
\end{definition}

\begin{remark}
Any homogeneous Finsler manifold $M=\frac{G}{H}$ is a reductive homogeneous space.
\end{remark}

\begin{definition}
For a homogeneous Riemannian manifold $(\frac{G}{H},\tilde{a})$, or a homogeneous Finsler manifold $(\frac{G}{H},F)$, a non-zero vector $X\in\frak{g}$ is called a geodesic vector if the curve  $\gamma(t)=\exp(tZ)(o)$ is a geodesic on $(\frac{G}{H},\tilde{a})$, or on $(\frac{G}{H},F)$, respectively.
\end{definition}
Suppose that $(\frac{G}{H},\tilde{a})$ is a homogeneous Riemannian manifold, and $\frak{g}=\frak{m}\oplus\frak{h}$ is a reductive decomposition. In \cite{KoVa}, it is proved that a vector $X\in\frak{g}$ is a geodesic vector if and only if
\begin{equation}\label{geodesic vector condition Riemannian case}
    a([X,Y]_\frak{m},X_\frak{m})=0 \ \ \ , \ \ \ \forall Y\in\frak{m}.
\end{equation}

In \cite{Latifi1}, D. Latifi has proved a similar theorem for Finslerian case as follows:

\begin{theorem}
A vector $X\in\frak{g}-\{0\}$ is a geodesic vector if and only if
\begin{equation}\label{geodesic vector condition Finsler case}
    g_{X_\frak{m}}(X_\frak{m}, [X, Z]_\frak{m})=0 \ \ \ , \ \ \ \forall Z\in\frak{g}.
\end{equation}
\end{theorem}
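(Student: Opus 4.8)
The plan is to reduce the statement to a single algebraic identity at the base point $o=eH$, in parallel with the Riemannian characterization \eqref{geodesic vector condition Riemannian case}, but with the Levi--Civita connection replaced by the Chern connection $\nabla$ carrying the reference vector $y=\dot\gamma$. First I would note that $\gamma(t)=\exp(tX)(o)$ is the integral curve through $o$ of the fundamental (Killing) vector field $\hat X$ generated by $X$, and that each $\exp(tX)$ is an isometry of $(\frac{G}{H},F)$ which sends $\gamma(s)\mapsto\gamma(s+t)$ and fixes $\hat X$. Hence $F(\dot\gamma)$ is constant, $\gamma$ has constant Finslerian speed, and $\gamma$ is a geodesic if and only if the geodesic condition holds at $t=0$, where $\dot\gamma(0)=X_{\frak m}$ and therefore $g_{\dot\gamma(0)}=g_{X_{\frak m}}$.

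Next I would use the standard fact that a constant-speed curve is a geodesic precisely when $\nabla^{\dot\gamma}_{\dot\gamma}\dot\gamma=0$, equivalently $g_{\dot\gamma}(\nabla^{\dot\gamma}_{\dot\gamma}\dot\gamma,u)=0$ for every $u$. To convert this into Lie-algebra data I would evaluate at $o$ and test against $u=\hat Z_o=Z_{\frak m}$ for $Z\in\frak g$; since these values fill out $T_oM=\frak m$, the geodesic condition becomes the requirement that $g_{X_{\frak m}}(\nabla^{X_{\frak m}}_{\hat X}\hat X,\hat Z)=0$ at $o$ for all $Z$.

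Then I would expand $\nabla^{X_{\frak m}}_{\hat X}\hat X$ at $o$ by the Finsler Koszul formula for the torsion-free, almost $g$-compatible Chern connection, applied to $U=V=\hat X$ with third slot $\hat Z$ and using $[\hat X,\hat Z]_o=-[X,Z]_{\frak m}$. The decisive simplification is that every genuinely non-Riemannian term is a value of the Cartan tensor with the reference vector $y=X_{\frak m}$ inserted in one slot, and such terms vanish because $C_y(y,\cdot,\cdot)=0$ by the $0$-homogeneity of $g_y$. What remains is exactly the Riemannian Koszul expression with $\tilde a$ replaced by $g_{X_{\frak m}}$; its symmetric part assembles, as in \eqref{geodesic vector condition Riemannian case}, into a term pairing $X_{\frak m}$ with $[X,Z]_{\frak m}$, and the condition collapses to $g_{X_{\frak m}}(X_{\frak m},[X,Z]_{\frak m})=0$ for all $Z\in\frak g$, which is \eqref{geodesic vector condition Finsler case}. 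For $Z\in\frak h$ the identity holds automatically, since the linear isotropy preserves $F$ and hence $g$, and the associated skew-symmetry relation again loses its Cartan correction because two of its slots are occupied by $X_{\frak m}$.

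The main obstacle is precisely the $y$-dependence of the fundamental tensor: there is no single background metric, and the failure of the Chern connection to be metric-compatible produces Cartan-tensor terms absent in the Riemannian proof. The heart of the argument is to verify that, because one only ever contracts against the reference direction $\dot\gamma$ itself, every such correction carries $y$ in a Cartan slot and therefore drops out. I would also confirm that the derivative-of-metric contributions organize themselves into the same symmetric tensor that appears in the Riemannian case, and that no boundary terms survive, both of which rest on the constancy of $F(\dot\gamma)$ and the Killing property of $\hat X$.
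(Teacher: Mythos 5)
You should first be aware that the paper does not prove this statement at all: it is quoted verbatim from Latifi \cite{Latifi1} (together with its corollary restricting $Z$ to $\frak{m}$), so there is no internal proof to measure your argument against. Judged on its own, your plan contains the two observations that any proof must rest on — that equivariance under $\exp(tX)$ reduces the geodesic equation to the single instant $t=0$ where $\dot\gamma(0)=X_{\frak m}$, and that every non-Riemannian correction in the almost-compatibility of the Chern connection is a Cartan-tensor value with the reference vector $y=X_{\frak m}$ in a slot, hence vanishes by $C_y(y,\cdot,\cdot)=0$. Those are correct and are exactly what makes the Finsler statement look identical to the Kowalski--Vanhecke condition \eqref{geodesic vector condition Riemannian case}.

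The gap is at the step you describe as ``what remains is exactly the Riemannian Koszul expression.'' After the three Cartan terms are discarded, the Koszul expansion of $2g_{\hat X}(\nabla^{\hat X}_{\hat X}\hat X,\hat Z)$ at $o$ leaves not only the bracket term $-2g_{X_{\frak m}}([\hat X,\hat Z]_o,X_{\frak m})$ but also the two first-derivative terms $2\hat X\bigl(g_{\hat X}(\hat X,\hat Z)\bigr)$ and $-\hat Z\bigl(g_{\hat X}(\hat X,\hat X)\bigr)$, and neither of these vanishes for free: $F(\hat X)$ is constant along the orbits of $\hat X$ but not on $M$, so $\hat Z\bigl(F^2(\hat X)\bigr)\neq 0$ in general. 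Even in the Riemannian case these two terms are converted into multiples of $\langle\nabla_{\hat X}\hat X,\hat Z\rangle$ only by invoking the Killing equations for \emph{both} $\hat X$ and $\hat Z$; in the Finsler case the analogous Killing identity for $g_y$ carries its own Cartan correction (involving $\nabla^y_\cdot y$ in a Cartan slot) which must again be shown to die when contracted against $y=X_{\frak m}$. You gesture at this in your closing paragraph (``I would also confirm that the derivative-of-metric contributions organize themselves\ldots''), but that confirmation is the actual content of the proof, not a loose end. Latifi's published argument sidesteps the Koszul formula entirely: he differentiates the momentum $t\mapsto g_{\dot\gamma}(\dot\gamma,\hat Z_{\gamma(t)})$ along the orbit, uses almost-compatibility plus $C_y(y,\cdot,\cdot)=0$ to identify this derivative with $g_{\dot\gamma}(\nabla^{\dot\gamma}_{\dot\gamma}\dot\gamma,\hat Z)$, and then computes the same derivative by equivariance as a Lie-algebra bracket, which produces $g_{X_{\frak m}}(X_{\frak m},[X,Z]_{\frak m})$ directly. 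I would recommend either carrying out the Finslerian Killing computation you allude to, or switching to the conserved-momentum route, which reaches \eqref{geodesic vector condition Finsler case} with less bookkeeping.
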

Also as a corollary of the above theorem he proved the following corollary:
\begin{cor}
A vector $X\in\frak{g}-\{0\}$ is a geodesic vector if and only if
\begin{equation}\label{geodesic vector condition Finsler case}
    g_{X_\frak{m}}(X_\frak{m}, [X, Z]_\frak{m})=0 \ \ \ , \ \ \ \forall Z\in\frak{m}.
\end{equation}
\end{cor}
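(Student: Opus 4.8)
The plan is to deduce this directly from the preceding theorem by showing that the condition tested only against $Z\in\frak{m}$ already forces the full condition tested against all $Z\in\frak{g}$. One implication is immediate: if $g_{X_\frak{m}}(X_\frak{m},[X,Z]_\frak{m})=0$ holds for every $Z\in\frak{g}$, then in particular it holds for every $Z\in\frak{m}$, since $\frak{m}\subseteq\frak{g}$. So the entire content lies in the converse, and by the theorem it suffices to promote the $\frak{m}$-condition to the full $\frak{g}$-condition.

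For the converse I would fix an arbitrary $Z\in\frak{g}$ and use the reductive decomposition $\frak{g}=\frak{m}\oplus\frak{h}$ to write $Z=Z_\frak{m}+Z_\frak{h}$ with $Z_\frak{m}\in\frak{m}$ and $Z_\frak{h}\in\frak{h}$. Since $[X,\cdot]_\frak{m}$ and $g_{X_\frak{m}}(X_\frak{m},\cdot)$ are linear, the quantity splits as
\begin{equation*}
g_{X_\frak{m}}(X_\frak{m},[X,Z]_\frak{m})=g_{X_\frak{m}}(X_\frak{m},[X,Z_\frak{m}]_\frak{m})+g_{X_\frak{m}}(X_\frak{m},[X,Z_\frak{h}]_\frak{m}).
\end{equation*}
The first term vanishes by hypothesis (it is the case $Z_\frak{m}\in\frak{m}$), so the problem reduces to showing the second term vanishes. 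Writing $X=X_\frak{m}+X_\frak{h}$, I would use that $\frak{h}$ is a subalgebra, so $[X_\frak{h},Z_\frak{h}]\in\frak{h}$ and contributes nothing to the $\frak{m}$-part, together with the reductivity relation $[\frak{h},\frak{m}]\subseteq\frak{m}$, which gives $[X_\frak{m},Z_\frak{h}]\in\frak{m}$. These two facts collapse the bracket to $[X,Z_\frak{h}]_\frak{m}=[X_\frak{m},Z_\frak{h}]$, so it remains to prove $g_{X_\frak{m}}(X_\frak{m},[X_\frak{m},Z_\frak{h}])=0$.

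The main step, which I expect to be the crux, is this last identity, and it is where the invariance of $F$ enters. Because the metric is $G$-invariant, the linear isotropy action of $H$ acts by isometries, so $F(\mathrm{Ad}(h)y)=F(y)$ for all $h\in H$ and $y\in\frak{m}$. Differentiating this along $\exp(tZ_\frak{h})$ at $t=0$ yields the infinitesimal invariance $d(F^2)_{y}([Z_\frak{h},y])=0$. Combining it with the Euler-type identity $d(F^2)_y(v)=2g_y(y,v)$, which follows from the $2$-homogeneity of $F^2$ together with the definition of $g_y$, gives $g_y(y,[Z_\frak{h},y])=0$ for every $y\in\frak{m}$. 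Setting $y=X_\frak{m}$ and using antisymmetry of the bracket produces exactly $g_{X_\frak{m}}(X_\frak{m},[X_\frak{m},Z_\frak{h}])=0$, completing the reduction. The delicate point is to justify the passage from the $G$-invariance of $F$ to this infinitesimal identity for the fundamental tensor $g_y$, and to verify that the Euler identity is applied to $g_y$ at the correct base vector $y=X_\frak{m}$.
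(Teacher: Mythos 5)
Your proof is correct. Note that the paper itself gives no argument for this corollary --- it is simply quoted from Latifi \cite{Latifi1} as a consequence of the preceding theorem --- and your reduction is exactly the standard one used there: split $Z=Z_{\frak{m}}+Z_{\frak{h}}$, observe that reductivity ($[\frak{h},\frak{m}]\subseteq\frak{m}$ and $\frak{h}$ a subalgebra) collapses $[X,Z_{\frak{h}}]_{\frak{m}}$ to $[X_{\frak{m}},Z_{\frak{h}}]$, and kill the remaining term by differentiating the $\mathrm{Ad}(H)$-invariance of $F$ on $\frak{m}$ and invoking the Euler identity $g_y(y,v)=\tfrac{1}{2}d(F^2)_y(v)$. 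The only point worth flagging is the implicit assumption $X_{\frak{m}}\neq 0$ (needed for $g_{X_{\frak{m}}}$ to be defined at all), which is a defect of the statement as given rather than of your argument.
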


\begin{theorem}
Let $(\frac{G}{H},F)$ be a homogeneous Finsler manifold, where $F$ is an invariant $(\alpha,\beta)-$metric defined by the relation (\ref{invariant alpha-beta metric}), an invariant Riemannian metric $\tilde{a}$ and an invariant vector field $\tilde{X}$. Then, $X:=\tilde{X}(H)$ is a geodesic vector of
$(\frac{G}{H},F)$ if and only if $X$ is a geodesic vector of $(\frac{G}{H},\tilde{a})$.
\end{theorem}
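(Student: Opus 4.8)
The plan is to reduce both geodesic conditions, via the two cited criteria, to statements about bilinear forms, and then to show that along the distinguished direction $X$ the fundamental tensor of $F$ is just a positive multiple of $\tilde{a}$. First I would invoke the Corollary following Latifi's theorem: $X=\tilde{X}(H)$ is a geodesic vector of $(\frac{G}{H},F)$ if and only if $g_{X_\mathfrak{m}}(X_\mathfrak{m},[X,Z]_\mathfrak{m})=0$ for all $Z\in\mathfrak{m}$. On the Riemannian side, the Kowalski--Vanhecke condition (\ref{geodesic vector condition Riemannian case}) says $X$ is a geodesic vector of $(\frac{G}{H},\tilde{a})$ if and only if $\tilde{a}([X,Z]_\mathfrak{m},X_\mathfrak{m})=0$ for all $Z\in\mathfrak{m}$. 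Note that under the reductive identification $T_o(\frac{G}{H})\cong\mathfrak{m}$ the vector $X=\tilde{X}(H)$ already lies in $\mathfrak{m}$, so $X_\mathfrak{m}=X$. Since the common bracket factor $[X,Z]_\mathfrak{m}$ is arbitrary, it suffices to prove that the two linear functionals $v\mapsto g_{X_\mathfrak{m}}(X_\mathfrak{m},v)$ and $v\mapsto\tilde{a}(X_\mathfrak{m},v)$ differ by a nonzero positive scalar.

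The computational heart is the evaluation of $g_y(y,v)$ for a general $(\alpha,\beta)$-metric. By the homogeneity of $F$, Euler's relation turns the definition of $g_y$ into $g_y(y,v)=\tfrac{1}{2}(F^2)_{y^j}v^j$. Writing $F^2=\alpha^2\phi^2(s)$ with $s=\beta/\alpha$ and differentiating, using $\partial_{y^j}\alpha^2=2y_j$ (indices lowered by $\tilde{a}$) together with $\partial_{y^j}s=b_j/\alpha-s\,y_j/\alpha^2$, and recalling $b_jv^j=\tilde{a}(\tilde{X},v)$, I expect to arrive at
\begin{equation*}
g_y(y,v)=\bigl(\phi^2-s\phi\phi'\bigr)\,\tilde{a}(y,v)+\alpha\,\phi\phi'\,\tilde{a}(\tilde{X},v).
\end{equation*}

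The decisive step is to specialize this to $y=X_\mathfrak{m}$. Because $X=\tilde{X}(H)$ is precisely the value of the defining field $\tilde{X}$ at the base point, along this direction the two vectors coincide, $\tilde{X}(o)=X=y$, so $\tilde{a}(\tilde{X},v)=\tilde{a}(X,v)$. Moreover $\beta=\tilde{a}(X,X)=\|X\|_\alpha^2$ and $\alpha=\|X\|_\alpha$, whence $s=\beta/\alpha=\|X\|_\alpha=\alpha$. Substituting $\tilde{X}=y$ and $\alpha=s$ makes the two $\phi\phi'$ terms cancel, leaving
\begin{equation*}
g_{X_\mathfrak{m}}(X_\mathfrak{m},v)=\phi^2\bigl(\|X\|_\alpha\bigr)\,\tilde{a}(X_\mathfrak{m},v).
\end{equation*}

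Since $\phi$ is positive, the factor $\phi^2(\|X\|_\alpha)$ is a nonzero positive constant, so $g_{X_\mathfrak{m}}(X_\mathfrak{m},[X,Z]_\mathfrak{m})=0$ for all $Z\in\mathfrak{m}$ holds exactly when $\tilde{a}([X,Z]_\mathfrak{m},X_\mathfrak{m})=0$ for all $Z\in\mathfrak{m}$, which by the two criteria is the claimed equivalence. I expect the main obstacle to be purely organizational: carrying out the differentiation of $F^2$ cleanly, and justifying the collinearity $\tilde{X}(o)=X$ under the identification $\mathfrak{m}\cong T_o(\frac{G}{H})$. Once the identity $\alpha=s$ along $y=X$ is observed, the proportionality of the two forms, and hence the theorem, follows at once.
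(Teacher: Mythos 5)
Your proposal is correct and follows essentially the same route as the paper: both arguments reduce the statement to the Kowalski--Vanhecke criterion on the Riemannian side and Latifi's criterion on the Finsler side, and then establish the key proportionality $g_{X_\mathfrak{m}}(X_\mathfrak{m},\cdot)=\phi^2\bigl(\sqrt{\tilde{a}(X,X)}\bigr)\,\tilde{a}(X_\mathfrak{m},\cdot)$, from which positivity of $\phi$ gives the equivalence. Your use of Euler's homogeneity relation to compute $g_y(y,\cdot)$ directly, together with the observation that $s=\alpha$ along $y=X$, is only a mild streamlining of the paper's computation of the full fundamental tensor $g_y(u,v)$ followed by specialization; the decisive identity and the conclusion are identical.
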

\begin{proof}
By using the formula $g_y(u,v)=\frac{1}{2}\frac{\partial^2}{\partial t \partial s}F^2(y+su+tv)|_{s=t=0}$ and some computations, for the $(\alpha,\beta)-$metric $F$ defined by relation (\ref{invariant alpha-beta metric}) we have:
\begin{eqnarray}\label{g_y}
    g_y(u,v)&=& \tilde{a}(u,v)\phi^2(r)+\tilde{a}(y,u)\phi(r)\phi'(r)\Big{(}\frac{\tilde{a}(X,v)}{\sqrt{\tilde{a}(y,y)}}-\frac{\tilde{a}(X,y)\tilde{a}(y,v)}{(\tilde{a}(y,y))^{\frac{3}{2}}}\Big{)} \nonumber\\
            && + \Big{(}(\phi'(r))^2+\phi(r)\phi''(r)\Big{)}\Big{(}\frac{\tilde{a}(X,v)}{\sqrt{\tilde{a}(y,y)}}-\frac{\tilde{a}(X,y)\tilde{a}(y,v)}{(\tilde{a}(y,y))^\frac{3}{2}}\Big{)}\nonumber\\
            && \ \ \ \ \ \ \ \ \times\Big{(}\tilde{a}(X,u)\sqrt{\tilde{a}(y,y)}-\frac{\tilde{a}(y,u)\tilde{a}(X,y)}{\sqrt{\tilde{a}(y,y)}}\Big{)}\\
            && +\frac{\phi(r)\phi'(r)}{\sqrt{\tilde{a}(y,y)}}\Big{(}\tilde{a}(X,u)\tilde{a}(y,v)-\tilde{a}(u,v)\tilde{a}(X,y)\Big{)}\nonumber,
\end{eqnarray}
where $r=\frac{\tilde{a}(X,y)}{\sqrt{\tilde{a}(y,y)}}$.
The equation (\ref{g_y}) shows that for any $Z\in\frak{m}$ we have
\begin{equation}\label{eq1}
    g_{X_\frak{m}}(X_\frak{m},[X,Z]_\frak{m})=\tilde{a}(X_\frak{m},[X,Z]_\frak{m})\phi^2(\sqrt{\tilde{a}(X,X)}).
\end{equation}
On the other hand, for an $(\alpha,\beta)-$metric we know that $\phi>0$. Therefore the equation (\ref{eq1}) completes the proof.
\end{proof}

\begin{theorem}
Let $(\frac{G}{H},F)$ be a homogeneous Finsler manifold, where $F$ is an invariant $(\alpha,\beta)-$metric defined by the relation (\ref{invariant alpha-beta metric}), an invariant Riemannian metric $\tilde{a}$ and an invariant vector field $\tilde{X}$. Let $Y\in\frak{g}-\{0\}$ such that $\phi''(r_\frak{m})\leq0$ and for any $Z\in\frak{m}$, $\tilde{a}(X,[Y,Z]_\frak{m})=0$, where $r_\frak{m}:=\frac{\tilde{a}(X,Y_\frak{m})}{\sqrt{\tilde{a}(Y_\frak{m},Y_\frak{m})}}$. Then $Y$ is a geodesic vector of $(\frac{G}{H},F)$ if and only if $Y$ is a geodesic vector of $(\frac{G}{H},\tilde{a})$.
\end{theorem}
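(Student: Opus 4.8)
The plan is to translate both sides of the claimed equivalence into algebraic conditions on $\frak{m}$ and then reduce the whole statement to the non-vanishing of one scalar factor. By the Corollary to Latifi's theorem, $Y$ is a geodesic vector of $(\frac{G}{H},F)$ if and only if $g_{Y_\frak{m}}(Y_\frak{m},[Y,Z]_\frak{m})=0$ for every $Z\in\frak{m}$, whereas by (\ref{geodesic vector condition Riemannian case}) the vector $Y$ is a geodesic vector of $(\frac{G}{H},\tilde{a})$ exactly when $\tilde{a}(Y_\frak{m},[Y,Z]_\frak{m})=0$ for every $Z\in\frak{m}$. Hence it is enough to compare $g_{Y_\frak{m}}(Y_\frak{m},[Y,Z]_\frak{m})$ with $\tilde{a}(Y_\frak{m},[Y,Z]_\frak{m})$.

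First I would substitute $y=u=Y_\frak{m}$ and $v=[Y,Z]_\frak{m}$ into the fundamental tensor formula (\ref{g_y}). With $y=u$ the factor $\tilde{a}(X,u)\sqrt{\tilde{a}(y,y)}-\frac{\tilde{a}(y,u)\tilde{a}(X,y)}{\sqrt{\tilde{a}(y,y)}}$ that multiplies the $(\phi')^2+\phi\phi''$ term collapses to zero, so the entire third summand drops out; likewise the last summand vanishes because its two products $\tilde{a}(X,u)\tilde{a}(y,v)$ and $\tilde{a}(u,v)\tilde{a}(X,y)$ coincide when $y=u$. Note that this is why $\phi''$ disappears from the computation itself and survives only through the standing hypothesis. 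What is left is the first summand together with the $\phi\phi'$ part of the second summand. At this point I would invoke the hypothesis $\tilde{a}(X,[Y,Z]_\frak{m})=0$ for all $Z\in\frak{m}$, which annihilates the remaining term $\tilde{a}(X,v)/\sqrt{\tilde{a}(y,y)}$; after writing $\tilde{a}(X,Y_\frak{m})/\sqrt{\tilde{a}(Y_\frak{m},Y_\frak{m})}=r_\frak{m}$ the expression simplifies to
\begin{equation*}
g_{Y_\frak{m}}(Y_\frak{m},[Y,Z]_\frak{m})=\tilde{a}(Y_\frak{m},[Y,Z]_\frak{m})\,\phi(r_\frak{m})\big(\phi(r_\frak{m})-r_\frak{m}\phi'(r_\frak{m})\big).
\end{equation*}
Thus the Finslerian and Riemannian conditions differ only by the scalar factor $\phi(r_\frak{m})\big(\phi(r_\frak{m})-r_\frak{m}\phi'(r_\frak{m})\big)$, and the equivalence will follow once this factor is shown to be nonzero.

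The main obstacle is precisely establishing that this factor does not vanish, and this is where the hypothesis $\phi''(r_\frak{m})\leq 0$ is used. Since $\phi>0$ for an $(\alpha,\beta)$-metric, it suffices to show $\phi(r_\frak{m})-r_\frak{m}\phi'(r_\frak{m})>0$. I would read the convexity condition (\ref{alpha-beta condition}) at $s=r_\frak{m}$ with $b=\|X\|_\alpha=\sqrt{\tilde{a}(X,X)}$: the Cauchy--Schwarz inequality gives $r_\frak{m}^2=\tilde{a}(X,Y_\frak{m})^2/\tilde{a}(Y_\frak{m},Y_\frak{m})\leq\tilde{a}(X,X)=b^2$, so that $|r_\frak{m}|\leq b$ and $b^2-r_\frak{m}^2\geq 0$. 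Combined with $\phi''(r_\frak{m})\leq 0$, the summand $(b^2-r_\frak{m}^2)\phi''(r_\frak{m})$ is nonpositive, and therefore (\ref{alpha-beta condition}) forces $\phi(r_\frak{m})-r_\frak{m}\phi'(r_\frak{m})>0$. Consequently $g_{Y_\frak{m}}(Y_\frak{m},[Y,Z]_\frak{m})=0$ holds for all $Z\in\frak{m}$ if and only if $\tilde{a}(Y_\frak{m},[Y,Z]_\frak{m})=0$ holds for all $Z\in\frak{m}$, and comparing the two geodesic-vector criteria completes the proof.
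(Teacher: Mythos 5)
Your proposal is correct and follows essentially the same route as the paper: both reduce the claim to the identity $g_{Y_\frak{m}}(Y_\frak{m},[Y,Z]_\frak{m})=\tilde{a}(Y_\frak{m},[Y,Z]_\frak{m})\,\phi(r_\frak{m})\bigl(\phi(r_\frak{m})-r_\frak{m}\phi'(r_\frak{m})\bigr)$ obtained from (\ref{g_y}) after using $\tilde{a}(X,[Y,Z]_\frak{m})=0$, and then deduce the non-vanishing of the scalar factor from (\ref{alpha-beta condition}) together with $\phi''(r_\frak{m})\leq0$ and $\phi>0$. Your write-up actually supplies details the paper leaves implicit (the collapse of the third and fourth summands when $y=u$, and the Cauchy--Schwarz bound $|r_\frak{m}|\leq b$ needed to invoke (\ref{alpha-beta condition})), but the argument is the same.
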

\begin{proof}
By using the relation (\ref{g_y}) and some computations we have
\begin{equation}\label{eq2}
    g_y(y,[y,z])=\tilde{a}(y,[y,z])\Big{(}\phi^2(r)-\phi(r)\phi'(r)r\Big{)}+\tilde{a}(X,[y,z])\Big{(}\phi(r)\phi'(r)\sqrt{\tilde{a}(y,y)}\Big{)}.
\end{equation}
So for any $Z\in\frak{m}$ we have
\begin{equation}\label{eq3}
    g_{Y_\frak{m}}(Y_\frak{m},[Y,Z]_\frak{m})=\tilde{a}(Y_\frak{m},[Y,Z]_\frak{m})
    \Big{(}\phi^2(r_\frak{m})-\phi(r_\frak{m})\phi'(r_\frak{m})r_\frak{m}\Big{)}+\tilde{a}(X,[Y,Z]_\frak{m})
    \Big{(}\phi(r_\frak{m})\phi'(r_\frak{m})\sqrt{\tilde{a}(Y_\frak{m},Y_\frak{m})}\Big{)}.
\end{equation}
But we considered, for any $Z\in\frak{m}$, $\tilde{a}(X,[Y,Z]_\frak{m})=0$ therefore we have
\begin{equation}\label{eq4}
    g_{Y_\frak{m}}(Y_\frak{m},[Y,Z]_\frak{m})=\tilde{a}(Y_\frak{m},[Y,Z]_\frak{m})
    \Big{(}\phi^2(r_\frak{m})-\phi(r_\frak{m})\phi'(r_\frak{m})r_\frak{m}\Big{)}.
\end{equation}
Now relation (\ref{alpha-beta condition}), assumption $\phi''(r_\frak{m})\leq0$ and positivity of $\phi$ together with the relation (\ref{eq4}) will complete the proof.

\end{proof}

\begin{lem}
Let $F$ be a bi-invariant Finsler metric on a connected Lie group. Then for every $0\neq y,z\in\frak{g}$ we have:
\begin{equation}\label{eq5}
    g_y(y,[y,z])=0
\end{equation}
\end{lem}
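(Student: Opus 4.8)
The plan is to exploit the defining feature of a bi-invariant metric: its restriction to $\frak{g}=T_eG$ is an $\mathrm{Ad}(G)$-invariant Minkowski norm. Since $F$ is left-invariant it is completely determined by a Minkowski norm on $\frak{g}$, and combining left-invariance with right-invariance shows that $F$ is invariant under each $\mathrm{Ad}(g)=(L_g\circ R_{g^{-1}})_\ast|_e$, so that $F(\mathrm{Ad}(g)y)=F(y)$ for all $g\in G$ and all $y\in\frak{g}$. Differentiating this identity is the heart of the argument, and I would state the equivalence of bi-invariance with $\mathrm{Ad}$-invariance explicitly as the first step, keeping in mind that $F$ is only smooth away from the origin so that all derivatives below are taken at $y\neq 0$.

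Next I would reduce the second-order quantity $g_y(y,[y,z])$ to a first-order object. Writing $L:=F^2$, which is positively homogeneous of degree two, Euler's identity $\sum_i L_{y^i}y^i=2L$ differentiated once more gives $\sum_i L_{y^iy^j}y^i=L_{y^j}$. Feeding this into the definition $g_y(u,v)=\frac{1}{2}\sum_{i,j}L_{y^iy^j}(y)\,u^iv^j$ yields $g_y(y,v)=\frac{1}{2}\frac{d}{dt}F^2(y+tv)|_{t=0}=F(y)\,dF_y(v)$ for every $v\in\frak{g}$ (a consistency check: this gives $g_y(y,y)=F(y)^2$, as it must). Applying this with $v=[y,z]$ gives $g_y(y,[y,z])=F(y)\,dF_y([y,z])$, so it suffices to prove that $dF_y([y,z])=0$.

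Finally I would differentiate the $\mathrm{Ad}$-invariance along the one-parameter subgroup generated by $z$. Setting $g=\exp(tz)$ in $F(\mathrm{Ad}(g)y)=F(y)$ and differentiating at $t=0$, the chain rule gives $dF_y\big(\tfrac{d}{dt}\mathrm{Ad}(\exp(tz))y|_{t=0}\big)=0$. Since $\tfrac{d}{dt}\mathrm{Ad}(\exp(tz))y|_{t=0}=\mathrm{ad}(z)y=[z,y]=-[y,z]$, this reads $dF_y([y,z])=0$. Combining with the previous step gives $g_y(y,[y,z])=F(y)\,dF_y([y,z])=0$, which is the claim.

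The main obstacle I anticipate is not computational but conceptual: one must invoke cleanly the two structural facts, namely that bi-invariance is equivalent to $\mathrm{Ad}$-invariance of the norm on $\frak{g}$, and the homogeneity reduction $g_y(y,v)=F(y)\,dF_y(v)$. Once both are in place the conclusion is immediate, and the sign ambiguity in $[z,y]=-[y,z]$ is irrelevant because the right-hand side vanishes. It is also worth noting that this lemma, read through Latifi's corollary, says precisely that every nonzero $y\in\frak{g}$ is a geodesic vector of a bi-invariant Finsler metric, recovering the expected fact that one-parameter subgroups are geodesics.
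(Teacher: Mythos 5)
Your proof is correct, and it is worth noting that the paper itself offers no argument here at all: its ``proof'' of this lemma is the single line ``See [Latifi--Razavi]''. So what you have written is a genuine, self-contained replacement for an external citation, and it is essentially the standard argument one finds in that reference and in the literature on homogeneous Finsler spaces. Both of your structural ingredients check out. First, bi-invariance does give $F(\mathrm{Ad}(g)y)=F(y)$ since $\mathrm{Ad}(g)=(L_g\circ R_{g^{-1}})_{\ast e}$ is the differential at $e$ of an isometry fixing $e$; note that connectedness of $G$ is only needed for the converse implication (that an $\mathrm{Ad}$-invariant Minkowski norm extends to a bi-invariant metric), which you never use, so your proof is if anything slightly more general than the statement requires. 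Second, the homogeneity reduction $g_y(y,v)=\tfrac12\,d(F^2)_y(v)=F(y)\,dF_y(v)$ is exactly Euler's identity applied to $[F^2]_{y^j}$, and your consistency check $g_y(y,y)=F(y)^2$ is the right sanity test. Differentiating $F(\mathrm{Ad}(\exp tz)y)=F(y)$ at $t=0$ then kills $dF_y([z,y])$, and the sign is indeed immaterial. The one point I would make explicit in a written version is the smoothness bookkeeping you already flag in passing: $t\mapsto\mathrm{Ad}(\exp tz)y$ stays in $\mathfrak{g}\setminus\{0\}$ for all $t$ because each $\mathrm{Ad}(\exp tz)$ is invertible, so the chain rule is legitimately applied on the slit tangent space where $F$ is $C^\infty$. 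With that, the argument is complete and correctly recovers, via Latifi's criterion quoted earlier in the paper, the fact that every one-parameter subgroup of a bi-invariant Finsler Lie group is a geodesic.
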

\begin{proof}
See \cite{Latifi2}.
\end{proof}

\begin{theorem}
Let $G$ be a connected Lie group. Suppose that $F$ is an $(\alpha,\beta)$-metric of Berwald type induced by a bi-invariant Riemannian metric $\tilde{a}$ and a left invariant vector field $\tilde{X}$. Let $\{P,y\}$ be a flag constructed in $(e,y)$, and $\{u,y\}$ be an orthonormal basis for $P$ with respect to the inner product induced by the Riemnnian metric $\tilde{a}$ on $\frak{g}=T_eG$. Then the flag curvature of the flag $\{P,y\}$ will be given by
\begin{eqnarray}
  K(P,y) &=& \frac{1}{4\theta}\Big{\{}\Big{(}\phi^2(r)-\phi(r)\phi'(r)r\Big{)}\|[y,u]\|^2_\alpha\nonumber\\
            && +\Big{(}(\phi')^2(r)+\phi(r)\phi''(r)\Big{)}\tilde{a}(X,u)\tilde{a}([X,y],[u,y]) \Big{\}},
\end{eqnarray}
where $r=\frac{\tilde{a}(X,y)}{\tilde{a}(y,y)}=\tilde{a}(X,y)$ and $\theta=\phi^2(r)\Big{(}\phi^2(r)+\phi(r)\phi''(r)\tilde{a}^2(X,u)-\phi(r)\phi'(r)r\Big{)}$.
\end{theorem}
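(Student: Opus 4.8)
The plan is to substitute directly into the flag curvature formula (\ref{flag}),
\[
K(P,y)=\frac{g_y(R(u,y)y,u)}{g_y(y,y)g_y(u,u)-g_y^2(y,u)},
\]
and to evaluate both the denominator and the numerator using the explicit expression (\ref{g_y}) for the fundamental tensor of an $(\alpha,\beta)$-metric. Throughout I will exploit the orthonormality of $\{u,y\}$ with respect to $\tilde{a}$, that is $\tilde{a}(y,y)=\tilde{a}(u,u)=1$ and $\tilde{a}(y,u)=0$, which in particular forces $r=\tilde{a}(X,y)$.

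First I would compute the denominator. Feeding $\tilde{a}(y,y)=1$, $\tilde{a}(u,u)=1$, $\tilde{a}(y,u)=0$ into (\ref{g_y}) collapses most of the terms and yields $g_y(y,y)=\phi^2(r)$, then $g_y(y,u)=\phi(r)\phi'(r)\tilde{a}(X,u)$, and finally $g_y(u,u)=\phi^2(r)+\big((\phi'(r))^2+\phi(r)\phi''(r)\big)\tilde{a}^2(X,u)-\phi(r)\phi'(r)r$. Forming $g_y(y,y)g_y(u,u)-g_y^2(y,u)$, the contribution $(\phi'(r))^2\tilde{a}^2(X,u)$ cancels against $g_y^2(y,u)$, leaving exactly $\theta=\phi^2(r)\big(\phi^2(r)+\phi(r)\phi''(r)\tilde{a}^2(X,u)-\phi(r)\phi'(r)r\big)$, so the denominator is $\theta$.

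The substantive step is the numerator, where the Berwald hypothesis enters. I would use that an $(\alpha,\beta)$-metric is of Berwald type precisely when $\tilde{X}$ (equivalently $\beta$) is parallel with respect to $\alpha$, so that the spray of $F$ coincides with that of $\tilde{a}$ and hence the Chern connection of $F$ equals the Levi-Civita connection of $\tilde{a}$. Consequently the curvature operator $R$ in (\ref{flag}) is just the Riemannian curvature of $\tilde{a}$. Since $\tilde{a}$ is bi-invariant we have $\nabla_XY=\tfrac12[X,Y]$ on left-invariant fields, and a short computation with $\nabla_yy=0$ gives $R(u,y)y=\tfrac14[y,[u,y]]$.

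It then remains to feed $w:=R(u,y)y=\tfrac14[y,[u,y]]$ into (\ref{g_y}) in the slot $g_y(w,u)$. The three inner products that arise are handled by the ad-invariance identity $\tilde{a}([z,p],q)=-\tilde{a}(p,[z,q])$, giving $\tilde{a}(y,w)=0$, $\tilde{a}(w,u)=\tfrac14\|[y,u]\|_\alpha^2$, and $\tilde{a}(X,w)=\tfrac14\tilde{a}([X,y],[u,y])$. Substituting these (again using $\tilde{a}(y,u)=0$ and $\tilde{a}(y,y)=1$) reduces the $\phi(r)\phi'(r)$-term and the $\big((\phi'(r))^2+\phi(r)\phi''(r)\big)$-term to the two summands of the bracket, and dividing by $\theta$ produces the stated formula. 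I expect the main obstacle to be the bookkeeping in this last substitution: one must track carefully which argument of $g_y$ is $w$ and which is $u$, and repeatedly invoke ad-invariance to rewrite bracket expressions such as $\tilde{a}(y,[y,[u,y]])$ and $\tilde{a}(X,[y,[u,y]])$ in the compact forms above. The conceptual crux, however, is the Berwald reduction of the previous paragraph, since that is what replaces the Finsler curvature by the readily computable bi-invariant Riemannian curvature $\tfrac14[y,[u,y]]$.
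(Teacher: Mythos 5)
Your proposal is correct and follows essentially the same route as the paper: reduce to the Riemannian curvature $R(u,y)y=\tfrac14[y,[u,y]]$ via the Berwald hypothesis, evaluate $g_y(y,y)$, $g_y(y,u)$, $g_y(u,u)$ and $g_y(R(u,y)y,u)$ from the explicit fundamental tensor (\ref{g_y}) using orthonormality, and simplify the bracket terms by ad-invariance of $\tilde{a}$. The only cosmetic difference is that the paper cites its Lemma (\ref{eq5}) to kill $\tilde{a}(y,[y,[u,y]])$ and quotes the identity $\tilde{a}([y,[u,z]],z)=\tilde{a}([u,y],[z,y])$ from the literature, whereas you derive both directly from ad-invariance, and you make the cancellation in the denominator yielding $\theta$ explicit.
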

\begin{proof}
Since the $(\alpha,\beta)$-metric $F$ is of Berwald type, the Levi-Civita connection of $\tilde{a}$ and the Chern connection of $F$ and therefore their curvature tensors coincide. So we have
\begin{equation}\label{curvature tensor}
    R(u,y)y=\frac{1}{4}[y,[u,y]].
\end{equation}
Now by using the relations (\ref{curvature tensor}) and (\ref{g_y}) we have:
\begin{eqnarray}\label{4g_y(R,u)}
  4g_y(R(u,y)y,u) &=& \Big{(}\phi^2(r)-\phi(r)\phi'(r)r\Big{)}\tilde{a}([y,[u,y]],u)\nonumber\\
                    && +\Big{(}\phi(r)\phi'(r)\tilde{a}(X,u)-\big{(}(\phi')^2(r)+\phi(r)\phi''(r)\big{)}\tilde{a}(X,u)r\Big{)}\tilde{a}(y,[y,[u,y]])\\
                    && +\Big{(}(\phi')^2(r)+\phi(r)\phi''(r)\Big{)}\tilde{a}(X,u)\tilde{a}(X,[y,[u,y]]),\nonumber
\end{eqnarray}
\begin{equation}\label{g_y(u,u)}
    g_y(u,u)=\phi^2(r)+\Big{(}(\phi')^2(r)+\phi(r)\phi''(r)\Big{)}\tilde{a}^2(X,u)-\phi(r)\phi'(r)r,
\end{equation}
\begin{equation}\label{g_y(y,y)}
    g_y(y,y)=\phi^2(r),
\end{equation}
and
\begin{equation}\label{g_y(y,u)}
    g_y(y,u)=\phi(r)\phi'(r)\tilde{a}(X,u).
\end{equation}
By substituting equations (\ref{4g_y(R,u)}), (\ref{g_y(u,u)}), (\ref{g_y(y,y)}) and (\ref{g_y(y,u)}) in (\ref{flag}) we get
\begin{eqnarray}\label{eq6}
  K(P,y) &=& \frac{1}{4\theta}\{\Big{(}\phi^2(r)-\phi(r)\phi'(r)r\Big{)}\tilde{a}([y,[u,y]],u)\nonumber\\
            && +\Big{(}\phi(r)\phi'(r)+\big{(}(\phi')^2(r)+\phi(r)\phi''(r)\big{)}r\Big{)}\Big{(}\tilde{a}(X,u)\tilde{a}(y,[y,[u,y]])\Big{)} \\
            && +\Big{(}(\phi')^2(r)+\phi(r)\phi''(r)\Big{)}\tilde{a}(X,u)\tilde{a}(X,[y,[u,y]]) \}.\nonumber
\end{eqnarray}
By using (\ref{eq5}) we have $\tilde{a}(y,[y,[u,y]])=0$. On the other hand, since $\tilde{a}$ is bi-invariant we have
$\tilde{a}([y,[u,z]],z)=\tilde{a}([u,y],[z,y])$ (see \cite{Latifi3}). Substituting these two last equations in (\ref{eq6}) completes the proof.
\end{proof}
As a special case when we consider $\phi(s)=1+s$, the formula given in the above theorem for the
flag curvature is equal to the formula given in corollary 3.4 of \cite{Latifi3} for bi-invariant Randers metrics.

\bibliographystyle{amsplain}

\end{document}